\documentclass[oneside,reqno,12pt]{amsart}
\usepackage[greek,english]{babel}
\usepackage{amsthm}
\usepackage{amsbsy}
\usepackage{amsfonts}
\usepackage{graphicx}
\usepackage{hyperref}
\hypersetup{colorlinks=true, citecolor=blue, linkcolor=red}

 \textheight=8.2 true in
   \textwidth=5.7 true in
    \topmargin 30pt
     \setcounter{page}{1}
\newtheorem{thm}{Theorem}

\newtheorem{rem}{Remark}


\begin{document}
\title[Minimality  of the connecting solution of the Painlev\'{e}-II PDE]{Energy minimality property of the connecting solution of the Painlev\'{e} phase transition model}
\author{Christos Sourdis}
\address{University of Athens, Greece.
}
\email{sourdis@uoc.gr}

\date{\today}
\begin{abstract}
 We establish the energy minimality property of solutions to the generalized  Painlev\'{e}-II equation $\Delta y-x_1y-2y^3=0$, $(x_1,x_2)\in \mathbb{R}^2$, which are increasing in $x_2$ and converge to the positive and negative Hastings-McLeod solutions as $x_2\to \pm \infty$.
 \end{abstract}
 \maketitle

The  Painlev\'{e}-II equation
\begin{equation}\label{eqPain1}
y''-xy-2y^3=0, \ x\in \mathbb{R},
\end{equation}
admits a unique positive solution $Y$ which is called the Hastings-McLeod solution (see \cite{hastings}). We note that $Y'<0$ and
\[
Y(x)-\sqrt{\frac{-x}{2}}   \to 0 \ \textrm{as}\ x\to -\infty,\ \ Y(x)\to 0 \ \textrm{as}\ x\to +\infty.
\]
In fact, as was shown in \cite{clercCAlc}, the Hastings-McLeod solution is an energy minimizer in the sense of Morse (defined analogously to (\ref{eqMorse}) below).
In this regard, we note that the Hastings-McLeod solution appears in the blow-up analysis of corner layer singularities of minimizers of a class of singularly perturbed Gross-Pitaevskii energies (see \cite{clercCAlc,karal} and the references therein).

Special solutions of the generalized Painlev\'{e}-II equation
\begin{equation}\label{eqEq}
\Delta y-x_1y-2y^3=0,\ \ (x_1,x_2)\in \mathbb{R}^2,
\end{equation}
are in the core of more complicated singularities of critical points of such Gross-Pitaevskii energies, where corner layers and transition layers interact around a junction point (see \cite{kowalArxivia}).
Again the interest is in solutions of (\ref{eqEq}) that are energy minimizers in the sense of Morse, that is
\begin{equation}\label{eqMorse}
E(y,\textrm{supp}\phi)\leq E(y+\phi,\textrm{supp}\phi)\ \ \ \forall \ \phi\in C_0^\infty(\mathbb{R}^2),
\end{equation}
where we  have denoted
\[
E(u,\Omega)=\int_{\Omega}^{}\left[\frac{1}{2}|\nabla u|^2+
\frac{1}{2}
x_1u^2 +
\frac{1}{2}
u^4\right]dx_1dx_2
\]
for any bounded subset $\Omega$ of $\mathbb{R}^2$. As we already pointed out,  the Hastings-McLeod solution $Y$ is energy minimal for (\ref{eqPain1}), and thus it is also for (\ref{eqEq}) (this can be proven as in \cite[Lem. 3.2]{jerisson}, see also Remark \ref{rem} herein for a simple proof). The current paper is motivated by the natural question whether (\ref{eqEq}) admits energy minimizing solutions that depend (nontrivially) on both  coordinates.

In the recent paper \cite{kowalArxivia} a solution to (\ref{eqEq}) was constructed as an energy minimizer with respect to compactly supported perturbations $\phi$ in (\ref{eqMorse}) that are odd in $x_2$. In particular,  $y$ is odd with respect to $x_2$ and can be chosen such that $y_{x_1}<0$ for $x_2>0$,
\begin{equation}\label{eq1}
y_{x_2}>0,
\end{equation}
\begin{equation}\label{eq2}
y(x_1,x_2+l)\to \pm Y(x_1)\ \textrm{in}\ C^2_{loc}(\mathbb{R}^2)\ \textrm{as}\ l\to \pm\infty,
\end{equation}
\[
y(x_1,x_2)\to 0\ \textrm{as}\ x_1\to +\infty \ (\textrm{uniformly in}\  x_2),
\]
while as $x_1\to -\infty$ it connects to a suitable singular scaling of the hyperbolic tangent kink solution to the Allen-Cahn equation (\ref{eqAC}).
It is worth mentioning that such a solution was previously briefly discussed in \cite{karal} (see relation (211) therein).
 We note in passing that  the division trick (\ref{eqwdef}) below, which was not used in \cite{kowalArxivia}, may clarify further the interesting link with the Allen-Cahn equation.

It is expected that the above solution  to (\ref{eqEq}) is an energy minimizer with respect to arbitrary compactly supported perturbations (not necessarily odd in $x_2$), that is (\ref{eqMorse}) holds (see also \cite{kowalBirs,smyrnelis}). This expectation is further supported by the fact that (\ref{eq1}) implies that $y$ is a linearly stable solution of (\ref{eqEq}) (see \cite{cabreLNM},\cite[Ch. 1.2]{dupaigne} for the appropriate definition in this context). If it is indeed the case, then this solution $y$  would be a natural two-dimensional analog of the Hastings-McLeod solution. Moreover, it would provide an example of an energy minimizing heteroclinic connection between nontrivial solutions to a scalar PDE in low dimensions (see, however, \cite{liu} for the case of the Allen-Cahn equation (\ref{eqAC}) in high dimensions). To the best of our knowledge, heteroclinic solutions with these properties have only been shown to exist in the case of vector Allen-Cahn systems in the plane (see \cite{s}).

The importance of showing the energy minimality of the solution $y$ can also be highlighted by the following observation. In the previously mentioned related problems, it is the case that the existence of an antisymmetric energy minimizing solution implies the existence of an energy minimizing heteroclinic connection between nontrivial solutions in one dimension higher (see \cite{jerisson}).

In this short paper we verify the aforementioned expectation.
In light of the above discussion, our result is expected to play an important role in the De Giorgi type program for (\ref{eqEq}) (in $\mathbb{R}^N$) that was proposed recently in \cite{kowalArxivia}. More precisely, we establish the following result.

\begin{thm}\label{thm}
Let $y$ be a solution of (\ref{eqEq}) that satisfies (\ref{eq1}) and (\ref{eq2}). Then, it satisfies the energy minimality property (\ref{eqMorse}).
\end{thm}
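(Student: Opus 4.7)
My plan is to use the ``division trick'' alluded to in the introduction to reduce the statement to a minimality property for a weighted Allen-Cahn-type functional.

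First I would set $w:=y/Y$, where $Y=Y(x_1)$ denotes the Hastings-McLeod solution extended trivially in $x_2$; this quotient is smooth and well defined on $\mathbb{R}^2$ because $Y>0$ everywhere. A direct computation using the Painlev\'{e}-II ODE $Y''=x_1Y+2Y^3$ shows that $w$ satisfies the divergence-form equation
\[
\mathrm{div}(Y^2\nabla w)+2Y^4 w(1-w^2)=0,
\]
which is the Euler-Lagrange equation of the weighted bistable functional
\[
F(u,\Omega):=\int_{\Omega}\left[\frac{1}{2}Y^2|\nabla u|^2+\frac{1}{2}Y^4(1-u^2)^2\right]dx_1\,dx_2.
\]
By (\ref{eq1}) and (\ref{eq2}), we have $w_{x_2}>0$ and $w(x_1,x_2)\to\pm 1$ locally uniformly in $x_1$ as $x_2\to\pm\infty$.

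Second, I would establish the ``gap identity''
\[
E(y+\phi,K)-E(y,K)=F(w+\phi/Y,K)-F(w,K)\qquad \forall\,\phi\in C_0^\infty(\mathbb{R}^2),\ K=\mathrm{supp}\,\phi.
\]
To prove it, I would expand $|\nabla(uY)|^2=Y^2|\nabla u|^2+(YY'u^2)_{x_1}-YY''u^2$ and substitute $YY''=x_1Y^2+2Y^4$ from the Painlev\'{e}-II equation: the potential terms reassemble into $\frac{1}{2}Y^4(1-u^2)^2$ up to an additive constant, while the remaining total-derivative term $\frac{1}{2}(YY'u^2)_{x_1}$ integrates by parts over $K$ to a boundary contribution that is identical for $u=w$ and for $u=w+\phi/Y$ (since these two functions agree on $\partial K$) and therefore cancels in the difference.

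Finally, the theorem reduces to showing that $w$ is a local minimizer of $F$. Since $w_{x_2}>0$, $w$ is a linearly stable monotone solution connecting the two wells $\pm 1$ as $x_2\to\pm\infty$, and the density of $F$ is independent of $x_2$, so the translates $\{w(\cdot,\cdot+t)\}_{t\in\mathbb{R}}$ provide an admissible foliation of the plane. I would then invoke -- and adapt to the present weighted setting -- the classical principle going back to Alberti-Ambrosio-Cabr\'{e} and Sternberg-Zumbrun that such monotone bistable connections are automatically local minimizers, carrying it out by a sliding/calibration argument along this foliation. The main technical obstacle I anticipate lies precisely in this final step: the weights $Y^2,Y^4$ degenerate as $x_1\to +\infty$, so one has to verify that the standard foliation/calibration argument persists in this degenerate-weighted setting, making essential use of the fact that $\phi$ is compactly supported and that $Y>0$ is bounded away from zero on every fixed compact set $K$.
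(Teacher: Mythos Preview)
Your proposal is correct and follows essentially the same approach as the paper: the division $w=y/Y$, the weighted Allen--Cahn reduction, and the sliding/foliation argument using the $x_2$-translates $\{w(\cdot,\cdot+t)\}$ are exactly the paper's ingredients. The only cosmetic difference is that you record the Lassoued--Mironescu energy identity $E(y+\phi)-E(y)=F(w+\phi/Y)-F(w)$ explicitly and then argue minimality for $F$, whereas the paper carries out the equivalent step at the PDE level by taking a minimizer $z$ of $E(\cdot,B_R)$ in $y+W_0^{1,2}(B_R)$, obtaining $|z/Y|<1$ from the minimality of $Y$, and sliding $w^t$ against $\omega=z/Y$ via the strong maximum principle.
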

\begin{proof}
Our proof is motivated by a well known division trick from the study of Ginzburg-Landau energies due to \cite{miro}. The main idea is to consider  the quotient
\begin{equation}\label{eqwdef}
w=\frac{y}{Y},
\end{equation}
which satisfies
\begin{equation}\label{eqweq}
  \textrm{div}(Y^2\nabla w)+2Y^4(w-w^3)=0,\ \ (x_1,x_2)\in \mathbb{R}^2.
\end{equation}
Moreover, we observe that (\ref{eq1}) and (\ref{eq2}) imply that
\begin{equation}\label{eq1=}
  w_{x_2}>0
\end{equation}
and
\begin{equation}\label{eq2=}
w(x_1,x_2+l)\to \pm 1\ \textrm{in}\ C^2_{loc}(\mathbb{R}^2)\ \textrm{as}\ l\to \pm\infty,
\end{equation}respectively.
In turn, we obtain that
\begin{equation}\label{eqmaxw}
|w(x_1,x_2)|=\frac{|y(x_1,x_2)|}{|Y(x_1)|}<1,\ \ (x_1,x_2)\in \mathbb{R}^2.
\end{equation}

We can then exploit the ideas from a  proof of a related result for the Allen-Cahn equation
\begin{equation}\label{eqAC}
  \Delta u+u-u^3=0 \ \textrm{in}\ \mathbb{R}^N, \ N\geq 1,
\end{equation}
which establishes the energy minimality of solutions such that $u_{x_N}>0$, $|u|<1$ and $u\to \pm 1$  as $x_N\to \pm \infty$.
This result was first proven in   \cite{alberti} by constructing a calibration, while a simpler proof  based on the sliding method and the maximum principle is presented in \cite[Thm. 1.32]{cabreLNM}. In the remainder of the proof we will adapt the proof in the latter reference and use crucially that (\ref{eqEq}) is translation invariant in the $x_2$ direction.

To establish the energy minimality of $y$, without loss of generality, it is enough to show that for any ball $B_R\subset \mathbb{R}^2$ the solution $y$ minimizes $E(\cdot,B_R)$ in the set
$
\left\{y+W_0^{1,2}(B_R)\right\}$.
The direct method of the calculus of variations can be applied to establish the existence of a minimizer $z$ of $E(\cdot,B_R)$ in the set
$
\left\{y+W_0^{1,2}(B_R)\right\}
$. The function $z$ satisfies in the classical sense
\[
\left\{\begin{array}{ll}
         \Delta z-x_1z-2z^3=0 &\textrm{in}\ B_R, \\
          |z|<Y & \textrm{in}\ \overline{B_R},\\
        z=y & \textrm{on}\ \partial B_R.
       \end{array}
 \right.
\]
We point out that the second property follows as in \cite[Lem. 3.1]{jerisson} from (\ref{eqmaxw}) and the energy minimality of the Hastings-McLeod solution $Y$ and of $z$.
Our goal is to show that $z\equiv y$ in $\overline{B_R}$, which would clearly imply the assertion of the theorem.

The quotient
\[
\omega=\frac{z}{Y}
\]
satisfies
\begin{equation}\label{eqomegaEq}
\left\{\begin{array}{ll}
         \textrm{div}(Y^2\nabla \omega)+2Y^4(\omega-\omega^3)=0 &\textrm{in}\ B_R, \\
          |\omega|<1 & \textrm{in}\ \overline{B_R},\\
        \omega=w & \textrm{on}\ \partial B_R.
       \end{array}
 \right.
\end{equation}

Let us consider the functions
\[
w^t(x_1,x_2)=w(x_1,x_2+t)\ \textrm{for}\ t\in \mathbb{R}.
\]
Clearly, these still satisfy (\ref{eqweq}).
Moreover, by the monotonicity assumption (\ref{eq1=}), we have that
\begin{equation}\label{eqmono}w^t < w^{t'}	
\ \textrm{in}\  \mathbb{R}^2\  \textrm{if}\ t<t'.\end{equation}
So, in light of (\ref{eq2=}),  the graphs of $w^t$, $t\in \mathbb{R}$,  form a foliation
that fills all of $\mathbb{R}^2 \times (-1, 1)$.

By (\ref{eq2=}) and the second relation in (\ref{eqomegaEq}), we have that the graph of $w^t$ in the compact set $\overline{B_R}$ is above the graph
of $\omega$ for $t$ large enough, and it is below the graph of $\omega$ for $t$ negative enough. If $w\not\equiv \omega$, we can assume without loss of generlity that
$\omega<w$ at some point in $B_R$.  It follows that, starting from $t =-\infty $, there
will exist a first $t_* < 0$ such that $w^{t_*}$ touches $\omega$ at a point $P\in \overline{B_R}$. This means that
$w^{t_*} \leq \omega$ in $\overline{B_R}$ and $w^{t_*} (P ) = \omega(P )$.

By (\ref{eqmono}), recalling that $t_* < 0$ and   that $\omega = w = w^0$ on $\partial B_R$, we see that the point $P$ cannot
belong to $\partial B_R$. Thus, $P$ has to  be an interior point of $B_R$.
But then we have that $w^{t_*}$ and $\omega$ are two solutions of the same semilinear equation  (\ref{eqweq}), the graph of $w^{t_*}$ stays below that of $\omega$, and they touch
each other at the interior point $\left(P , \omega(P )\right)$. This is a contradiction with the strong
maximum principle. Hence, we conclude that the desired relation $w\equiv \omega$ holds. Consequently, we deduce that $y\equiv z$ which completes the proof.
\end{proof}
\begin{rem}\label{rem}
A simple argument for showing the energy minimality of the Hastings-McLeod solution $Y$ with respect to (\ref{eqEq}) (in any dimension) can be given by a foliation argument as in the above proof. To this end, we note that the family $\left\{tY\ :\ t\geq 1 \right\}$ provides a foliation by supersolutions to (\ref{eqEq}) of the set  $\left\{(x_1,x_2,v)\right.$ $\left. \ :\ v\geq Y(x_1)\right\}$, while the family $\left\{tY\ :\ 0\leq t\leq 1 \right\}$ provides a foliation by lower solutions to (\ref{eqEq}) of the set  $\left\{(x_1,x_2,v)\right.$ $\left. \ :\ 0\leq v\leq Y(x_1)\right\}$.
\end{rem}

\end{document}